\newtheorem{theorem}{Theorem}[section]
\newtheorem{lemma}[theorem]{Lemma}
\theoremstyle{definition}
\newtheorem{definition}[theorem]{Definition}
\newtheorem{remark}[theorem]{Remark}
\def\R{\mathbb{R}}
\begin{document}

\title[]{Lower bounds for the centered Hardy-Littlewood maximal operator on the real line}

\author{F.J. P\'erez L\'azaro}
\address{Departamento de Matem\'aticas y Computaci\'on,
Universidad  de La Rioja, 26006 Logro\~no, La Rioja, Spain.}
\email{javier.perezl@unirioja.es}

\thanks{2010 {\em Mathematical Subject Classification.} 42B25}

\thanks{The author was partially supported by the Spanish Research Grant with reference PGC2018-096504-B-C32.}







\begin{abstract} Let $1<p<\infty$. We prove that there exists an $\varepsilon_p>0$ such that for each $f\in L^p(\mathbb{R})$, the centered Hardy-Littlewood maximal operator $M$ on $\mathbb{R}$ satisfies the lower bound $\|Mf\|_{L^p(\mathbb{R})}\ge (1+\varepsilon_p)\|f\|_{L^p(\mathbb{R})}$.
\end{abstract}


\maketitle


\section {Introduction}

\markboth{F.J. P\'erez-L\'azaro}{On lower bounds for the centered Hardy-Littlewood maximal operator on the real line}

Given a locally integrable real-valued function $f$ on $\mathbb{R}^d$ define its uncentered maximal function $M_uf(x)$ as
\begin{equation*}
M_uf(x)=\sup_{B\ni x}\frac{1}{|B|}\int_B|f(y)|dy,
\end{equation*}
where the supremum is taken over all balls $B\in\mathbb{R}^d$ containing the point $x$; here $|B|$ denotes the $d$-dimensional Lebesgue measure of the ball $B$. The usefulness of this and other maximal functions comes from the fact that they are larger than the original function $f$, but not much larger, and usually improve regularity. Since $M_uf$ is often used as a close upper bound for $f$, it is interesting to know precisely how much larger $M_uf$ is, and the same question can be asked about other maximal operators.

It is well known that $M_uf(x)\ge f(x)$ a.e. On the other hand, since an average does not exceed a supremum, $\|M_uf\|_{L^\infty(\mathbb{R}^d)}=\|f\|_{L^\infty(\mathbb{R}^d)}$.
It is shown in  \cite{MaSo} that 
 $M_u$ has no nonconstant fixed points.  In \cite{Ler} A. Lerner studied  whether given any $1<p<\infty$, there is a constant $\varepsilon_{p,d}>0$ such that
\begin{equation}\label{quesLer}
\|M_u f\|_{L^p(\mathbb{R}^d)}\ge (1+\varepsilon_{p,d})\|f\|_{L^p(\mathbb{R}^d)}\quad\text{for all }f\in L^p(\mathbb{R}^d).
\end{equation}
We note that lack of existence of nonconstant fixed points does not imply (\ref{quesLer}).
Using Riesz's sunrise lemma, Lerner proved for the real line that
\begin{equation*}
\|M_uf\|_{L^p(\mathbb{R})}\ge \left(\frac{p}{p-1}\right)^{1/p}\|f\|_{L^p(\mathbb{R})}.
\end{equation*}  
A proof of inequality (\ref{quesLer}) for every dimension $d\ge 1$ and every $1<p<\infty$ was obtained in \cite{IJN}. Inequality (\ref{quesLer}) has been shown to be true for other maximal functions, say, maximal funtions defined taking the supremum over shifts and dilates of a fixed centrally symmetric convex body, maximal functions defined over $\lambda$-\textit{dense family} of sets, \textit{almost centered} maximal functions (see \cite{IJN}) and dyadic maximal functions \cite{MeNi}.

For the centered maximal function
\begin{equation*}
Mf(x)=\sup_{r>0}\frac{1}{|B_r(x)|}\int_{B_r(x)}|f(y)|dy,
\end{equation*} 
Lerner's inequality 
\begin{equation}\label{quescenter}
\|M f\|_{L^p(\mathbb{R}^d)}\ge (1+\varepsilon_{p,d})\|f\|_{L^p(\mathbb{R}^d)}\quad\text{for all }f\in L^p(\mathbb{R}^d),
\end{equation}
need not hold. First of all, it was shown in \cite{Ko} that $M$ has a nonconstant fixed point $f\in L^p(\mathbb{R}^d)$ (that is, $Mf=f$) if and only if $d\ge 3$ and $p>d/(d-2)$. But, as was noted before, the lack of nonconstant fixed points does not imply (\ref{quescenter}). In this context, Ivanisvili and Zbarsky (cf. \cite{IZ}) noted that (\ref{quescenter}) is valid for any $d$ when $p\equiv p_d$ is sufficiently close to $1$.

The main result in \cite{IZ} proves for $d=1$ and every $1<p<2$ that (\ref{quescenter}) is true, in the form
\begin{equation}\label{IvanZba}
\|Mf\|_{L^p(\mathbb{R})}\ge \left(\frac{p}{2(p-1)}\right)^{1/p}\|f\|_{L^p(\mathbb{R})}.
\end{equation}  
They also proved that inequality (\ref{quescenter}) holds for $d=1$ and $1<p<\infty$, if we restrict $f$ to the class of indicator functions or unimodal functions. Besides, they conjectured (see \cite[p. 343]{IZ}) that (\ref{quescenter}) is valid for $d=1$ and $1<p<\infty$ without restrictions on the functions. 

In this paper we give an afirmative answer to their conjecture, proving the following
\begin{theorem}\label{theo_main}
Let $1<p<\infty$. Then there exists an $\varepsilon_p>0$ such that
\begin{equation*}
\|Mf\|_{L^p(\mathbb{R})}\ge (1+\varepsilon_p)\|f\|_{L^p(\mathbb{R})}\quad \text{for any }f\in L^p(\mathbb{R}).
\end{equation*}
Furthermore, if $A_p$ is the best constant for the strong $(p,p)$ inequality satisfied by the centered maximal operator on the real line, and $\gamma_n$ is as in Definition \ref{def_gamma}, then for every $n\ge 1$ we can select
\begin{equation*}
(1+\varepsilon_p)^p=1+\left(\frac{A_p-1}{A_p^n-1}\right)^p\left[\left(\frac{\gamma_n p}{(p-1)}\right)^{1/p}-1\right]^p.
\end{equation*}
\end{theorem}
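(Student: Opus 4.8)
The plan is to deduce the theorem from one genuinely analytic ingredient — a lower bound for the iterated operator $M^n$ — by an elementary extraction that converts it into a bound for the single application $Mf$, paying only the damping factor $(A_p-1)/(A_p^n-1)$. Concretely, I take as the main input the estimate
\begin{equation}\label{iterbd}
\|M^n f\|_{L^p(\mathbb{R})}\ge\left(\frac{\gamma_n p}{p-1}\right)^{1/p}\|f\|_{L^p(\mathbb{R})},
\end{equation}
which is where the constant $\gamma_n$ of Definition \ref{def_gamma} enters and where the real work lies. For $1<p<2$ this already holds at $n=1$ with $\gamma_1=1/2$ — that is precisely \eqref{IvanZba} — and since $1/2>(p-1)/p$ exactly when $p<2$, a single average suffices there; for general $p<\infty$ one instead expects $\gamma_n\to1$ as $n\to\infty$, so that $\gamma_n>(p-1)/p$ for $n$ large and \eqref{iterbd} becomes nontrivial for every finite $p$. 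Throughout I may assume $f\ge0$, since $Mf=M|f|$ and $\|f\|_p=\||f|\|_p$; then $u_k:=M^k f$ is a nondecreasing sequence of nonnegative functions with $u_0=f$.

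The extraction rests on three elementary facts; write $w_k:=\|u_{k+1}-u_k\|_{L^p(\mathbb{R})}\ge0$. First, superadditivity of $t\mapsto t^p$ on $[0,\infty)$, applied to $u_{k+1}=u_k+(u_{k+1}-u_k)$ and integrated, gives
\begin{equation}\label{super}
\|u_{k+1}\|_p^p\ge\|u_k\|_p^p+w_k^p,
\end{equation}
of which I shall use only the case $k=0$, namely $\|Mf\|_p^p\ge\|f\|_p^p+w_0^p$. Second, the sublinearity $|Mg_1-Mg_2|\le M|g_1-g_2|$ of the maximal operator, used with $g_1=u_k$ and $g_2=u_{k-1}$ together with the strong $(p,p)$ bound, yields for $k\ge1$
\begin{equation}\label{geom}
w_k=\|Mu_k-Mu_{k-1}\|_p\le\|M|u_k-u_{k-1}|\|_p\le A_p\|u_k-u_{k-1}\|_p=A_p w_{k-1},
\end{equation}
hence $w_k\le A_p^{\,k}w_0$. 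Third, the telescoping $u_n=f+\sum_{k=0}^{n-1}(u_{k+1}-u_k)$ and Minkowski's inequality give $\|M^n f\|_p\le\|f\|_p+\sum_{k=0}^{n-1}w_k$.

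Combining is now routine. Using $\sum_{k=0}^{n-1}A_p^{\,k}=(A_p^n-1)/(A_p-1)$ in the telescoping bound and then inserting \eqref{iterbd} gives
\begin{equation*}
\left(\frac{\gamma_n p}{p-1}\right)^{1/p}\|f\|_p\le\|M^n f\|_p\le\|f\|_p+w_0\,\frac{A_p^n-1}{A_p-1},
\end{equation*}
so that
\begin{equation*}
w_0\ge\frac{A_p-1}{A_p^n-1}\left[\left(\frac{\gamma_n p}{p-1}\right)^{1/p}-1\right]\|f\|_p.
\end{equation*}
Feeding this into the case $k=0$ of \eqref{super} yields
\begin{equation*}
\|Mf\|_p^p\ge\|f\|_p^p+w_0^p\ge\left(1+\left(\frac{A_p-1}{A_p^n-1}\right)^p\left[\left(\frac{\gamma_n p}{p-1}\right)^{1/p}-1\right]^p\right)\|f\|_p^p,
\end{equation*}
which is exactly the asserted value of $(1+\varepsilon_p)^p$; and $\varepsilon_p>0$ holds as soon as $\gamma_n>(p-1)/p$, i.e. as soon as the right-hand bracket is positive.

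Since this extraction is entirely soft, the whole difficulty is concentrated in the input \eqref{iterbd}. The main obstacle will therefore be Definition \ref{def_gamma} and the estimate behind it: understanding how repeated averaging by the \emph{centered} operator forces a quantitative $L^p$ gain that is uniform in $f$, with a constant $\gamma_n$ that can be pushed above $(p-1)/p$ for $n$ sufficiently large — a threshold that for $p\ge2$ exceeds $1/2$, so the single-average value $\gamma_1=1/2$ no longer suffices and genuine iteration is needed. This is exactly the place where the one-dimensional geometry of $M$ (the scales and centers of the balls realizing the supremum, as analyzed in \cite{IZ}) must be exploited, and where the restriction to indicator or unimodal functions in \cite{IZ} has to be removed.
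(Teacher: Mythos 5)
Your proposal is correct and is essentially identical to the paper's own proof: both rest on the superadditivity inequality $\|Mf\|_p^p\ge\|f\|_p^p+\|Mf-f\|_p^p$, the geometric growth $\|M^{k+1}f-M^kf\|_p\le A_p^k\|Mf-f\|_p$ summed to $(A_p^n-1)/(A_p-1)$, and Theorem \ref{theo_Mn} as the analytic input, combined in the same order. Your closing observation that positivity of $\varepsilon_p$ requires $\gamma_n p/(p-1)>1$ for $n$ large matches the paper's final remark.
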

Let us note that this expression is stricly larger that $1$ if we suitably choose $n$, taking into account that $\gamma_n \uparrow 1$ (see Remark \ref{rem_gamma}).

Our approach consists of extending the methods in \cite{IZ} and using the following inequality (see Lemma \ref{lem_Mn} below) for any locally integrable function in $\mathbb{R}$:
\begin{equation}\label{des_approach}
M^nf\ge \gamma_n   M_Lf,
\end{equation}
where $M_L$ denotes the left maximal operator and $M^n$ denotes the iteration of the centered maximal operator $n$ times. This inequality extends the trivial inequality $Mf\ge M_Lf/2$.

Using (\ref{des_approach}), we prove
\begin{theorem}\label{theo_Mn}
Let $n\in\mathbb{N}$ and $f\in L^p(\R)$. Then,
\begin{equation*}
\|M^nf\|_p\ge \left(\frac{\gamma_n p}{(p-1)}\right)^{1/p}\|f\|_p.
\end{equation*}
\end{theorem}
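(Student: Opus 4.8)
The plan is to run a Lerner-type argument based on Riesz's sunrise lemma, but to insert the pointwise bound (\ref{des_approach}) at the level of distribution functions rather than pointwise, so as to lose only a single power of $\gamma_n$ instead of $\gamma_n^p$. Since $M$, $M_L$ and hence $M^n$ depend only on $|f|$, I may assume $f\ge 0$; I may also assume $\|f\|_p>0$ and, by the $L^p$-boundedness of $M$, that $M^nf\in L^p$, so that all integrals below are finite. The two elementary pointwise facts I will use are $f\le M_Lf$ a.e.\ (Lebesgue differentiation) and $M^nf\ge f$ (iterate $Mg\ge g$ for $g\ge 0$).

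The key input is the exact distributional identity furnished by the sunrise lemma for the one-sided operator: for every $t>0$,
\begin{equation*}
t\,\bigl|\{M_Lf>t\}\bigr|=\int_{\{M_Lf>t\}}f\,dx .
\end{equation*}
Indeed, applying the sunrise lemma to $g(x)=\int_0^x(f-t)$ exhibits $\{M_Lf>t\}$ as a disjoint union of open intervals $I_k$ on each of which $\int_{I_k}f=t|I_k|$, and summing gives the identity. The heart of the proof is then the claim that, writing $E_\lambda=\{M^nf>\lambda\}$, one has
\begin{equation*}
\int_{E_\lambda}f\,dx\le \frac{\lambda}{\gamma_n}\,|E_\lambda|\qquad(\lambda>0).
\end{equation*}
To prove it I decompose $E_\lambda$ using (\ref{des_approach}): since $M^nf\ge\gamma_nM_Lf$, the set $\{M_Lf>\lambda/\gamma_n\}$ is contained in $E_\lambda$, so $E_\lambda$ is the disjoint union of $\{M_Lf>\lambda/\gamma_n\}$ and the remainder $E_\lambda\setminus\{M_Lf>\lambda/\gamma_n\}$. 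On the first piece the sunrise identity at level $t=\lambda/\gamma_n$ contributes exactly $\tfrac{\lambda}{\gamma_n}\bigl|\{M_Lf>\lambda/\gamma_n\}\bigr|$, while on the remainder $M_Lf\le\lambda/\gamma_n$, whence $f\le M_Lf\le\lambda/\gamma_n$ a.e.\ and $\int f\le \tfrac{\lambda}{\gamma_n}$ times the measure of the remainder. Adding the two pieces yields the claim.

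With the claim in hand I finish by the layer-cake formula and Tonelli, the inner $\lambda$-integration running over $(0,M^nf(x))$:
\begin{equation*}
\|M^nf\|_p^p=p\int_0^\infty\lambda^{p-1}|E_\lambda|\,d\lambda\ge p\gamma_n\int_0^\infty\lambda^{p-2}\!\int_{E_\lambda}f\,dx\,d\lambda=\frac{p\gamma_n}{p-1}\int_{\R}f\,(M^nf)^{p-1}\,dx .
\end{equation*}
Here the exponent $p-2>-1$ makes the $\lambda$-integral converge at $0$. Finally $M^nf\ge f\ge0$ gives $(M^nf)^{p-1}\ge f^{p-1}$, so the last integral is at least $\|f\|_p^p$, which is precisely the asserted bound.

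I expect the only genuine obstacle to be conceptual rather than computational. The naive use of (\ref{des_approach})---bounding $\|M^nf\|_p\ge\gamma_n\|M_Lf\|_p$ and then invoking the one-sided estimate $\|M_Lf\|_p\ge(p/(p-1))^{1/p}\|f\|_p$---only produces the constant $\gamma_n$ in place of the sharper $\gamma_n^{1/p}$, i.e.\ a factor $\gamma_n^p$ rather than $\gamma_n$ inside the $p$-th power. Recovering the single power forces one to feed (\ref{des_approach}) into the distributional identity through the splitting above, and it is essential there that the sunrise relation is an \emph{equality}; any slack in it would propagate into the final constant.
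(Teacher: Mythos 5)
Your proof is correct and follows essentially the same route as the paper: your distributional claim $\int_{E_\lambda}f\le(\lambda/\gamma_n)|E_\lambda|$ is Lemma \ref{lem_last} in a mildly stronger form (integrating $f$ over all of $E_\lambda$ rather than over $\{f>\lambda\}$), proved by the identical splitting into $\{M_Lf>\lambda/\gamma_n\}$ (handled by the sunrise lemma) and its complement (where $f\le M_Lf\le\lambda/\gamma_n$), followed by the same layer-cake computation. The only cosmetic difference is that you arrive at $\frac{p\gamma_n}{p-1}\int f\,(M^nf)^{p-1}$ and need one extra application of $M^nf\ge f$, whereas the paper's version lands directly on $\frac{p\gamma_n}{p-1}\int f^p$.
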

Since $\gamma_1=1/2$, this result is an extension of (\ref{IvanZba}).

Let us remark that simultaneously and independently, Zbarsky \cite{Zb} has proved (\ref{quescenter}) for $d=1$ and $d=2$ and the centered maximal operator associated to centrally symmetric convex bodies. This extends Theorem \ref{theo_main}, but without an explicit expression for the lower constant $\varepsilon_p$. 
\vskip .3 cm

I am indebted to Prof. J. M. Aldaz for some suggestions that improved the presentation of this note.

\section{Definitions and lemmas}
 
 \begin{definition}\label{def_g_functions}
For all $n\in\mathbb{N}\cup\{0\}$, define the following functions $g_n:[-1/2,\infty)\longrightarrow [0,1]$.
Let $g_0$ be the null function and for $n\ge 1$, set
\begin{equation}\label{eqrecursive}
g_n(t):=\frac{1+\int_0^{1+2t}g_{n-1}(u)du}{2(1+t)},\quad t\ge -\frac{1}{2}.
\end{equation} 
\end{definition}

In the next lemma we give an explicit formula for the functions $g_n$.

\begin{lemma}\label{lem_g_explicit}
Let $\{g_n\}_{n=0}^\infty$ be the functions from Definition \ref{def_g_functions}. Then,
\begin{enumerate}
\item $0\le g_n(t)\le 1$ for all $n\in\{0\}\cup\mathbb{N}$ and all $t\ge -1/2$.
\item For all $n\ge 0$ and $t\ge -1/2$, we have
\begin{equation*}
g_n(t)=\frac{\log(2+2t)}{1+t}\sum_{j=1}^{n}\frac{\log^{j-2}(2^{j}(1+t))}{2^{j} (j-1)!}.
\end{equation*}
\item For all $t\ge -1/2$, we have $\lim_{n\to\infty}g_n(t)=1$.
\end{enumerate}
\end{lemma}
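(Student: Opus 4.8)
The plan is to establish the three items in order, the first two by induction on $n$ from the recursion (\ref{eqrecursive}) and the third by summing the closed form of item (2). For item (1) I would argue by induction. Nonnegativity is immediate, since $g_0\equiv 0$ and for $n\ge1$ the right-hand side of (\ref{eqrecursive}) is a quotient of nonnegative quantities by the positive number $2(1+t)$. For the upper bound, assuming $g_{n-1}\le 1$ throughout $[-1/2,\infty)$ and using $1+2t\ge 0$ for $t\ge-1/2$, I bound $\int_0^{1+2t}g_{n-1}(u)\,du\le 1+2t$, so that $g_n(t)\le\frac{1+(1+2t)}{2(1+t)}=1$, which closes the induction.

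For item (2) I would again induct on $n$; the cases $n=0,1$ are a direct check (the empty sum gives $g_0\equiv0$, and for $g_1$ the prefactor $\log(2+2t)$ cancels the single factor $\log^{-1}(2(1+t))$, leaving $1/(2(1+t))$). For the inductive step I insert the formula for $g_{n-1}$ into (\ref{eqrecursive}) and integrate term by term. In the $j$-th summand the substitution $s=\log\bigl(2^{j}(1+u)\bigr)$, under which $ds=du/(1+u)$ and $\log(2+2u)=s-(j-1)\log2$, turns the integrand into $\bigl(s-(j-1)\log2\bigr)s^{j-2}\,ds$, with antiderivative $s^{j}/j-\log2\,s^{j-1}$. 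The decisive observation is that this antiderivative vanishes at the lower endpoint $s=j\log2$, so that only the upper endpoint $s=(j+1)\log2+\log(1+t)$ contributes; factoring out $s-j\log2=\log(2+2t)$ then shows that the $j$-th term of $g_{n-1}$ produces exactly the $(j+1)$-th term of the sum claimed for $g_n$. Reindexing, together with the fact that the constant $1$ in (\ref{eqrecursive}) supplies precisely the $j=1$ term, yields the formula.

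For item (3), a root-test estimate gives $\limsup_{j}|a_j|^{1/j}=\tfrac{e\log2}{2}<1$ for the general term $a_j$ of the series in (2), so the series converges and $\lim_{n}g_n(t)$ equals its full sum (which by item (1) is at most $1$). To evaluate it I set $z=\log(2+2t)$, note that $\log\bigl(2^{j}(1+t)\bigr)=(j-1)\log2+z$, and reindex by $k=j-1$ to reach
\begin{equation*}
\lim_{n\to\infty}g_n(t)=\frac{z}{2(1+t)}\sum_{k=0}^{\infty}\frac{(k\log2+z)^{k-1}}{2^{k}\,k!}.
\end{equation*}
Writing $2^{-k}=e^{-k\log2}$ and $k\log2+z=\log2\,(k+z/\log2)$, I would evaluate the sum with the generating-function identity $e^{\rho T(w)}=\sum_{k\ge0}\rho(\rho+k)^{k-1}w^{k}/k!$ for the tree function $T(w)=\sum_{k\ge1}k^{k-1}w^{k}/k!=-W(-w)$. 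Taking $\rho=z/\log2$ and $w=\log2\cdot e^{-\log2}=\tfrac{\log2}{2}$, for which $T(w)=\log2$ and $w<1/e$ (so the series converges), collapses the sum to $e^{z}/z$; since $e^{z}=2+2t$, the limit equals $\tfrac{z}{2(1+t)}\cdot\tfrac{e^{z}}{z}=1$.

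The least mechanical step, and the main obstacle, is the closed-form summation in item (3): recognizing the reindexed series as an instance of the tree-function (Lambert $W$) generating function, and spotting that $w=\tfrac{\log2}{2}$ is exactly the argument at which $T(w)=\log2$. By contrast, the term-by-term integration in item (2) is routine once one notices the fortuitous cancellation at the lower limit $s=j\log2$, and item (1) is entirely elementary.
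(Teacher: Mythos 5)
Your proof is correct and follows essentially the same route as the paper: induction for the bounds in (1), term-by-term integration of the closed form through the recursion for (2) (the paper organizes this as an induction on the differences $h_n=g_{n+1}-g_n$ and leaves the integration as a ``calculus exercise'' that you carry out explicitly), and for (3) the tree-function identity $e^{\rho T(w)}=\sum_{k\ge0}\rho(\rho+k)^{k-1}w^k/k!$ at $w=\tfrac{\log2}{2}$ is exactly the Lagrange-expansion identity $e^{xy}=\sum_k x(x+kz)^{k-1}(ye^{-yz})^k/k!$ that the paper invokes with $y=1$, $z=\log2$. Your added convergence check via the root test is a welcome detail, but the argument is the same.
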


\begin{proof}
Part 1 of the lemma follows by simple induction in $n$.
To prove part 2, for each $n\in \{0\}\cup\mathbb{N}$, we define $h_n(t):=g_{n+1}(t)-g_n(t)$. Since $g_0(t)=0$, it holds that
\begin{equation*}
g_{n}(t)=\sum_{j=0}^{n-1}h_j(t).
\end{equation*}

Let us note that $h_0(t)=g_1(t)-g_0(t)=g_1(t)=1/(2+2t)>0$. Besides, by (\ref{eqrecursive}),
\begin{equation}\label{eqhrecursive}
h_n(t)=\frac{\int_0^{1+2t}h_{n-1}(u)du}{2(1+t)},\quad n\ge 1,\,t\ge -1/2. 
\end{equation}

Now we set for each $n\ge 0$, 
\begin{equation*}
c_n(t)=\frac{\log(2+2t)}{1+t}\frac{1}{2^{n+1} n!}\log^{n-1}(2^{n+1}(1+t)),\quad t\ge -1/2,
\end{equation*}
where $c_0(-1/2)$ is defined by continuity, i.e., $c_0(-1/2)=\lim_{t\to -1/2^+}c_0(t)=\lim_{t\to -1/2^+}1/(2+2t)=1$. Then we have that $h_0(t)=c_0(t)$ for all $t\in[-1/2,\infty)$. Moreover, it is a calculus exercise to check that (\ref{eqhrecursive}) also holds with $c_n$ and $c_{n-1}$ instead of $h_n$ and $h_{n-1}$, for every $n\ge 1$. As a consequence, we have that $c_n(t)=h_n(t)$ for all $n\ge 0$ and all $t\in[-1/2,\infty)$. Thus, part 2 of the lemma holds.

Finally we will prove part 3 of the lemma. By part 2, we have that
\begin{equation}\label{eqlimit}
\lim_{n\to\infty}g_n(t)= \frac{\log(2+2t)}{1+t}\sum_{j=1}^{\infty}\frac{\log^{j-2}(2^{j}(1+t))}{2^{j} (j-1)!},\quad t\ge -1/2.
\end{equation}

 As a consequence of Lagrange expansion \cite[p.206, eq.6.24]{Ch}, we can obtain
\begin{equation*}
e^{xy}=\sum_{k=0}^\infty x(x+kz)^{k-1}\frac{(y e^{-y z})^k}{k!},\quad x,y,z\in\mathbb{R}.
\end{equation*}
This equation with $y=1$, $x=\log(2+2t)$ and $z=\log2$ implies
\begin{equation*}
2+2t=\sum_{k=0}^\infty \log(2+2t)(\log(2+2t)+k\log2)^{k-1}\frac{2^{-k}}{k!}.
\end{equation*}
From this equation and (\ref{eqlimit}), part 3 of the lemma follows.
\end{proof}

\begin{remark}
Part 3 of the lemma could also be proved by  suitably bounding the functions $g_n$, using an argument inspired in \cite[p.4-5]{IZ} to obtain:
\begin{equation}\label{eqboundinductive}
1\ge g_n(t)\ge 1-\left(\frac{\sqrt{8}}{3}\right)^n\sqrt{1+t},\quad n\in\mathbb{N}, t\ge 0.
\end{equation}
\end{remark}

\begin{definition}\label{def_gamma}
For each $n\in \mathbb{N}$, let us denote by $\gamma_n:=g_n(0)$, where $g_n$ are the functions from Definition \ref{def_g_functions}. 
\end{definition}

\begin{remark}\label{rem_gamma}
It follows from the previous definition and Lemma \ref{lem_g_explicit} that
\begin{equation*}
\gamma_n=\frac{1}{2}\sum_{j=1}^n \frac{j^{j-2}}{(j-1)!}\left(\frac{\log2}{2}\right)^{j-1},\quad n\in \mathbb{N}.
\end{equation*}
Besides, $\gamma_1=1/2$ and $\gamma_n$ increases to $1$ when $n\to \infty$.
\end{remark}

\begin{definition}
Let $f:\mathbb{R}\longrightarrow\mathbb{R}$ be a locally integrable function. We define the left maximal function $M_Lf$ as
\begin{equation*}
M_Lf(x)=\sup_{h>0}\frac{1}{h}\int_{x-h}^x |f(u)|du,\quad x\in\mathbb{R}.
\end{equation*}
\end{definition}

It is easy to see that $Mf\ge M_Lf/2$. In the next lemma we  extend this inequality to the iterated centered maximal operator, defined via  $M^1f:=Mf$, and for $n\ge 2$, $M^nf:=M(M^{n-1}f)$.

\begin{lemma}\label{lem_Mn}
Let $\{\gamma_n\}_{n=1}^\infty$ be the sequence from Definition \ref{def_gamma}. Then, for all $n\in\mathbb{N}$ and all $f$ in $L^1_{loc}(\R)$, $M^nf \ge \gamma_n M_Lf$.  
\end{lemma}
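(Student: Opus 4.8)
The plan is to exploit the translation and dilation invariance of $M$, $M^n$ and $M_L$ to reduce the stated pointwise inequality to a single normalized configuration, and then to prove a sharper pointwise statement by induction on $n$ whose recursive structure is exactly (\ref{eqrecursive}). Since $Mf=M|f|$ and $M_Lf=M_L|f|$, I may assume $f\ge 0$. Fixing a point and using translation invariance, it suffices to treat the origin, and since for every $h>0$ the dilation $f_h(x):=f(hx)$ satisfies $\int_{-1}^0 f_h=\frac1h\int_{-h}^0 f$ while $M^nf_h(0)=M^nf(0)$, the whole lemma will follow once I show: if $a:=\int_{-1}^0 f$, then $M^nf(0)\ge \gamma_n a$. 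Indeed, applying this normalized claim to each $f_h$ and taking the supremum over $h$ recovers $M^nf(0)\ge\gamma_n M_Lf(0)$, and translation invariance then gives the inequality at every point.

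For the core I would prove, by induction on $n\ge 1$, the stronger pointwise claim $(P_n)$:
\begin{equation*}
M^nf(x)\ge a\,g_n(x)\qquad\text{for all }x\ge 0 .
\end{equation*}
Evaluating $(P_n)$ at $x=0$ yields $M^nf(0)\ge a\,g_n(0)=\gamma_n a$, which is what is needed. The geometric device throughout is to center, at a point $x\ge 0$, the ball $B_{x+1}(x)=(-1,2x+1)$, whose left endpoint is exactly $-1$, so that it always contains the source interval $[-1,0]$. For the base case $n=1$, since $f\ge 0$ and $g_1(x)=1/(2(1+x))$,
\begin{equation*}
Mf(x)\ge\frac{1}{2(x+1)}\int_{-1}^{2x+1}f\ge\frac{a}{2(x+1)}=a\,g_1(x).
\end{equation*}

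For the inductive step, assuming $(P_{n-1})$ and using the same ball,
\begin{equation*}
M^nf(x)=M(M^{n-1}f)(x)\ge\frac{1}{2(x+1)}\left(\int_{-1}^0 M^{n-1}f+\int_0^{2x+1}M^{n-1}f\right).
\end{equation*}
On $[-1,0]$ I would bound $M^{n-1}f\ge f$ (from $Mg\ge g$ a.e.\ for $g\ge 0$, iterated), so that $\int_{-1}^0 M^{n-1}f\ge a$; on $[0,2x+1]$, where every integration point is nonnegative, I would apply $(P_{n-1})$ to get $\int_0^{2x+1}M^{n-1}f\ge a\int_0^{2x+1}g_{n-1}(u)\,du$. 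Substituting and invoking (\ref{eqrecursive}) with $t=x$ (so that $1+2t=2x+1$) gives
\begin{equation*}
M^nf(x)\ge a\,\frac{1+\int_0^{2x+1}g_{n-1}(u)\,du}{2(x+1)}=a\,g_n(x),
\end{equation*}
closing the induction.

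The step I expect to be decisive is conceptual rather than computational: identifying $(P_n)$ as the right induction hypothesis. The point is that on the source interval $[-1,0]$ no pointwise lower bound on $M^{n-1}f$ is available, since the mass $a$ may be concentrated near either endpoint; the correct move is to control that part only in integral form, via $M^{n-1}f\ge f$, which contributes precisely the constant $1$ (after normalization by $a$) in the numerator of (\ref{eqrecursive}), while the pointwise bound $a\,g_{n-1}$ on $[0,\infty)$ supplies the integral term. The matching choice of radius $r=x+1$, making the ball reach exactly to $-1$, is what fuses these two contributions into the recursion.
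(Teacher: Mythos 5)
Your proof is correct and follows essentially the same route as the paper: the induction hypothesis $(P_n)$ is exactly the paper's inequality (\ref{recursive}), with the ball of radius $x+1$ reaching back to the left endpoint of the source interval and the split into the mass term plus the $\int g_{n-1}$ term reproducing the recursion (\ref{eqrecursive}). The only difference is cosmetic: you normalize to $x=0$, $h=1$ via translation/dilation invariance at the outset, whereas the paper carries general $x$ and $h$ through the computation.
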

\begin{proof}
Let us assume that $f\ge 0$. Fix $x\in \R$ and $h>0$. Define $F(x,h):=\frac{1}{h}\int_{x-h}^xf(t)dt$. Now, using an inductive process in $n\in\mathbb{N}$, we will prove that for all $y\ge x$, 
\begin{equation}\label{recursive}
M^nf(y)\ge F(x,h) \ g_n\left(\frac{y-x}{h}\right),
\end{equation}
where $g_n$ comes from Definition \ref{def_g_functions}.
Indeed, for $n=1$ and every $y\ge x$, we have 
\begin{equation*}
Mf(y)\ge \frac{1}{2(y-x+h)}\int_{x-h}^{2y-x+h}f(t)dt\ge  \frac{hF(x,h)}{2(y-x+h)}= \frac{F(x,h)}{2\left(1+\frac{y-x}{h}\right)}=F(x,h)g_1\left(\frac{y-x}{h}\right).
\end{equation*}
Hence, by induction hypothesis, for all $n\ge 2$ and all $y\ge x$,
\begin{equation*}
M^nf(y)\ge \frac{1}{2(y-x+h)}\int_{x-h}^{2y-x+h}M^{n-1}f(t)dt\ge  \frac{\int_{x-h}^x f(t)dt+\int_{x}^{2y-x+h}M^{n-1}f(t)dt}{2(y-x+h)}\ge
\end{equation*}
\begin{equation*}
\frac{hF(x+h)+F(x,h)\int_{x}^{2y-x+h}g_{n-1}\left(\frac{t-x}{h}\right)dt}{2(y-x+h)}
=hF(x,h)\frac{1+\int_{0}^{1+2\frac{y-x}{h}}g_{n-1}(z)dz}{2(y-x+h)}=
\end{equation*}
\begin{equation*}
=F(x,h)\frac{1+\int_{0}^{1+2\frac{y-x}{h}}g_{n-1}(z)dz}{2(1+\frac{y-x}{h})}=F(x,h)g_n\left(\frac{y-x}{h}\right),
\end{equation*}
so (\ref{recursive}) is proved. As a consequence, $M^nf(x)\ge F(x,h)g_n(0)=F(x,h)\gamma_n$, and taking the supremum over $h>0$ we obtain
\begin{equation*}
M^nf(x)\ge M_Lf(x)\cdot \gamma_n,\quad n\in\mathbb{N}.
\end{equation*} 
\end{proof}

\begin{remark}
It is known that for every $f\in L^p(\mathbb{R})$, we have $\|M_Lf\|_p\ge \left(\frac{p}{p-1}\right)^{1/p}\|f\|_p$ (see \cite[p.93, 2.1.11(a)]{Gra} and integrate).
This inequality, together with the previous lemma, leads inmediately to
\begin{equation}\label{ineq_weak}
\|M^nf\|_p\ge \gamma_n\|M_Lf\|_p\ge \gamma_n\left(\frac{p}{p-1}\right)^{1/p}\|f\|_p.
\end{equation}
This inequality is enough to prove Theorem \ref{theo_main}, but with a smaller $\varepsilon_p$. Indeed, inequality (\ref{ineq_weak}) will be improved in Theorem \ref{theo_Mn} by the use of the following lemma, which is an extension of \cite[Lemma 3]{IZ} and uses the same arguments. We include it here for the reader's convenience.  
\end{remark}

\begin{lemma}\label{lem_last}
Let $0<\lambda<\infty$ and $n\in\mathbb{N}$. For every locally integrable function $f\ge 0$ defined on the real line, it holds that 
\begin{equation*}
|\{M^nf>\lambda\}|\ge \frac{\gamma_n}{\lambda}\int_{\{f>\lambda\}}f.
\end{equation*}
\end{lemma}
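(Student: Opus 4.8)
The plan is to follow the argument of \cite[Lemma 3]{IZ}, replacing the one-step estimate $Mf\ge M_Lf/2$ by the $n$-step spreading estimate underlying Lemma \ref{lem_Mn}. Since $M^n$ and $M_L$ depend only on $|f|$, I may assume $f\ge 0$. The engine is a pointwise \emph{spreading inequality} slightly stronger than the one displayed in the proof of Lemma \ref{lem_Mn}: the ball centred at $y$ of radius $y-x+h$ already contains $[x-h,x]$ as soon as $y\ge x-h/2$, so the very same computation gives
\begin{equation*}
M^nf(y)\ge F(x,h)\,g_n\!\Bigl(\tfrac{y-x}{h}\Bigr),\qquad F(x,h)=\tfrac1h\int_{x-h}^{x}f,\quad\text{for all }y\ge x-\tfrac h2 .
\end{equation*}
I will use three properties of $g_n$ coming from (\ref{eqrecursive}) and Lemma \ref{lem_g_explicit}: $g_n$ is nonincreasing, $g_n(0)=\gamma_n$, and $g_n(-1/2)=1$ (read off directly from (\ref{eqrecursive})). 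The value $g_n(-1/2)=1$ is what makes the argument possible, since it lets the spreading bound reach the level $\lambda$ itself, and not merely $\gamma_n\lambda$.

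The second ingredient is Riesz's rising-sun lemma applied to $\Phi(t)=\int_0^t(f-\lambda)$. Since $M_Lf(x)>\lambda$ holds exactly when $\Phi(x)>\Phi(x-h)$ for some $h>0$, the set $\{M_Lf>\lambda\}$ is open and splits into countably many components $I_k=(\alpha_k,\beta_k)$ with $\Phi(\alpha_k)=\Phi(\beta_k)$, i.e.\ $\int_{\alpha_k}^{\beta_k}f=\lambda(\beta_k-\alpha_k)$. Because $M_Lf\ge f$ almost everywhere one has $\{f>\lambda\}\subseteq\{M_Lf>\lambda\}$ up to a null set, whence
\begin{equation*}
\int_{\{f>\lambda\}}f=\sum_k\int_{I_k\cap\{f>\lambda\}}f ,\qquad \int_{I_k\cap\{f>\lambda\}}f\le\int_{\alpha_k}^{\beta_k}f=\lambda(\beta_k-\alpha_k).
\end{equation*}
It therefore suffices to prove a localized estimate: for each $k$ there is a set $R_k\subseteq\{M^nf>\lambda\}$ with $|R_k|\ge\frac{\gamma_n}{\lambda}\int_{I_k\cap\{f>\lambda\}}f$, the $R_k$ being pairwise disjoint; summing over $k$ then yields the lemma. (Equivalently, one may first replace $f$ by the truncation $f\mathbf{1}_{\{f>\lambda\}}$, which only decreases the left-hand side through $M^nf\ge M^n(f\mathbf{1}_{\{f>\lambda\}})$ and, by the rising-sun identity for $M_L$, turns the right-hand side into $\gamma_n\,|\{M_L(f\mathbf{1}_{\{f>\lambda\}})>\lambda\}|$.)

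On a fixed component $I_k=(\alpha,\beta)$ one has $\Phi(x)>\Phi(\alpha)$ for every $x\in(\alpha,\beta)$, so the left-interval $[\alpha,x]$ satisfies $F(x,x-\alpha)=\lambda+\frac{\Phi(x)-\Phi(\alpha)}{x-\alpha}>\lambda$. Feeding this into the spreading inequality and using that $g_n$ decreases from $g_n(-1/2)=1$ to $g_n(0)=\gamma_n$, each such $x$ forces $M^nf>\lambda$ on an interval that starts at the midpoint $(x+\alpha)/2$ (the point $t=-1/2$, where the bound equals the average $F(x,x-\alpha)$) and extends to the right as far as the excess $F(x,x-\alpha)-\lambda$ permits. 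Sweeping $x$ across $(\alpha,\beta)$ and taking the union of these intervals produces the candidate $R_k\subseteq\{M^nf>\lambda\}$; the disjointness of the $R_k$ follows because the construction keeps each $R_k$ essentially attached to $I_k$ and the components are ordered along the line.

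The hard part is precisely the quantitative, localized measure estimate $|R_k|\ge\frac{\gamma_n}{\lambda}\int_{I_k\cap\{f>\lambda\}}f$ together with the control of overlaps. The subtlety is that the average over a whole component equals $\lambda$, so the gain producing the factor $\gamma_n$ cannot come from a single averaging window reaching only the midpoint; it must be harvested from the profile $g_n$ integrated against the continuum of windows $[\alpha,x]$ weighted by the excess of $f$ over $\lambda$, and it is exactly this weighting that makes the mass $\int_{I_k\cap\{f>\lambda\}}f$ (rather than the larger $\lambda|I_k|$) the correct quantity on the right. Carrying out this computation is the heart of the proof, and it is the step at which the argument of \cite[Lemma 3]{IZ} is reproduced with $g_n$ and the endpoint value $g_n(-1/2)=1$ in place of the single constant $1/2$. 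Finally, the passage from bounded, compactly supported $f\ge 0$ to a general locally integrable $f$ is handled by monotone convergence, both sides being limits of the corresponding truncations.
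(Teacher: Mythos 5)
Your proposal is not a proof: the step you yourself identify as ``the heart of the proof'' --- the localized estimate $|R_k|\ge\frac{\gamma_n}{\lambda}\int_{I_k\cap\{f>\lambda\}}f$ together with the disjointness of the sets $R_k$ --- is never carried out. Everything before that point is scaffolding (a correct extension of the spreading inequality to $y\ge x-h/2$, the observation $g_n(-1/2)=1$, the component decomposition of $\{M_Lf>\lambda\}$), and everything after it is a description of what would have to be computed. Since the factor $\gamma_n$ is exactly what the lemma is about, deferring the computation that produces it leaves the proof with a genuine gap. It is also not clear that your plan closes easily: on a component $(\alpha,\beta)$ where $F(x,x-\alpha)$ barely exceeds $\lambda$, the window on which $g_n$ stays above $\lambda/F(x,x-\alpha)$ is tiny, so the measure you can harvest near each midpoint degenerates, and you would have to show that integrating these degenerate contributions against the excess of $f$ over $\lambda$ still recovers the full constant $\gamma_n$. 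That is a nontrivial extremal computation, not a routine verification.

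The paper's proof avoids all of this by applying the rising-sun lemma at the \emph{raised} level $\lambda/\gamma_n$ rather than at $\lambda$. From $M^nf\ge f$ a.e.\ and $M^nf\ge\gamma_nM_Lf$ (Lemma \ref{lem_Mn}) one gets, up to a null set,
\begin{equation*}
\{M^nf>\lambda\}\supseteq\{f>\lambda\}\cup\Bigl\{M_Lf>\tfrac{\lambda}{\gamma_n}\Bigr\}.
\end{equation*}
On $\{f>\lambda\}\setminus\{M_Lf>\lambda/\gamma_n\}$ one has $f\le M_Lf\le\lambda/\gamma_n$, so the measure of that piece already dominates $\frac{\gamma_n}{\lambda}\int f$ over it; and Riesz's rising-sun lemma gives $|\{M_Lf>\lambda/\gamma_n\}|\ge\frac{\gamma_n}{\lambda}\int_{\{M_Lf>\lambda/\gamma_n\}}f$. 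Adding the two disjoint pieces yields the lemma in three lines, with no component-by-component analysis, no profile $g_n(t)$ for $t<0$, and no overlap control. I would recommend abandoning the localized construction and adopting this level-raising trick, which is the same mechanism as in \cite[Lemma 3]{IZ}.
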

\begin{proof}
Since $M^nf\ge f$ almost everywhere and, by Lemma \ref{lem_Mn}, $M^nf\ge \gamma_n M_Lf$, we have (with the exception of a null set) that  
\begin{equation*}
\{M^nf>\lambda\}\supseteq \{f>\lambda\}\cup \{M_Lf>\frac{\lambda}{\gamma_n}\}.
\end{equation*}
Then, we separate this into two disjoint sets, take Lebesgue measure, apply $M_Lf>f$ a.e. and using Riesz's rising sun lemma \cite[p.93]{Gra} we obtain,
\begin{equation*}
|\{M^nf>\lambda\}|\ge |\{f>\lambda\}\setminus \{M_Lf>\frac{\lambda}{\gamma_n}\}|+| \{M_Lf>\frac{\lambda}{\gamma_n}\}|\ge
\end{equation*}
\begin{equation*}
\ge \frac{\gamma_n}{\lambda}\int_{\{f>\lambda\}\setminus \{M_Lf>\frac{\lambda}{\gamma_n}\}}f+\frac{\gamma_n}{\lambda}\int_{\{M_Lf>\frac{\lambda}{\gamma_n}\}}f\ge \frac{\gamma_n}{\lambda}\int_{\{f>\lambda\}\cup\{M_Lf>\frac{\lambda}{\gamma_n}\}}f\ge\frac{\gamma_n}{\lambda}\int_{\{f>\lambda\}}f.
\end{equation*}
\end{proof}
\section{Proofs of the theorems}
To prove the theorems one just has to use the previous lemmas and some arguments from \cite{IZ}. We include here the proofs for the reader's convenience.

\begin{proof}[Proof of Theorem \ref{theo_Mn}]
Without loss of generality we assume that $f\ge 0$. By Lemma \ref{lem_last} we have:
\begin{equation*}
|\{M^nf>\lambda\}|\ge \frac{\gamma_n}{\lambda}\int_\mathbb{R}f(x)\chi_{(\lambda,\infty)}(f(x))dx.
\end{equation*}
We multiply both sides of the previous inequality by $p\lambda^{p-1}$ and integrate:
\begin{equation*}
\int_\mathbb{R}(M^nf(x))^pdx\ge \int_0^\infty \gamma_n p\lambda^{p-2}\int_\mathbb{R}f(x)\chi_{(\lambda,\infty)}(f(x))dxd\lambda=
\end{equation*}
\begin{equation*}
\gamma_n p\int_\mathbb{R} f(x)\int_0^{f(x)}\lambda^{p-2} d\lambda dx=\frac{\gamma_n p}{(p-1)}\int_\mathbb{R} f(x)^p dx.
\end{equation*}
\end{proof}

\begin{proof}[Proof of Theorem \ref{theo_main}]
First, we have that
\begin{equation}\label{eqA}
\|Mf\|_p^p\ge \|f\|_p^p+\|Mf-f\|_p^p.
\end{equation}
Besides, if we denote by $A_p>1$ the best constant for the strong $(p,p)$ inequality satisfied by $M$, it holds that 
\begin{equation}\label{eqB}
\|M^nf-f\|_p\le \sum_{i=1}^n\|M^if-M^{i-1}f\|_p\le \sum_{i=1}^n A_p^{i-1}\|Mf-f\|_p=\frac{A_p^n-1}{A_p-1}\|Mf-f\|_p.
\end{equation}
Furthermore, by Theorem \ref{theo_Mn}
\begin{equation*}
\left(\frac{\gamma_n p}{(p-1)}\right)^{1/p}\|f\|_p\le \|M^nf\|_p\le \|M^nf-f\|_p+\|f\|_p .
\end{equation*}
Then
\begin{equation}\label{eqC}
\left[\left(\frac{\gamma_n p}{(p-1)}\right)^{1/p}-1\right]\|f\|_p\le \|M^nf-f\|_p.
\end{equation}
Now, putting (\ref{eqA}), (\ref{eqB}) and (\ref{eqC}) together we get
\begin{equation*}
\|Mf\|_p^p\ge \|f\|_p^p +\left(\frac{A_p-1}{A_p^n-1}\right)^p\|M^nf-f\|_p^p\ge
\end{equation*}
\begin{equation*}
 \|f\|_p^p  + \left(\frac{A_p-1}{A_p^n-1}\right)^p\left[\left(\frac{\gamma_n p}{(p-1)}\right)^{1/p}-1\right]^p\|f\|_p^p=
\end{equation*}
\begin{equation*} 
= \|f\|_p^p \left\{1+\left(\frac{A_p-1}{A_p^n-1}\right)^p\left[\left(\frac{\gamma_n p}{(p-1)}\right)^{1/p}-1\right]^p\right\}.
\end{equation*}
Let us note that, by Remark \ref{rem_gamma}, for $n$ big enough, $\gamma_n p/(p-1)>1$. 
\end{proof}

\end{document}